\def\th@plain{%
  \upshape 
}
\renewenvironment{proof}[1][\proofname]{\par
  \pushQED{\qed}%
  \normalfont \topsep6\p@\@plus6\p@\relax
  \trivlist
  \item[\hskip\labelsep
        \bfseries
    #1\@addpunct{.}]\ignorespaces
}{%
  \popQED\endtrivlist\@endpefalse
}
\newtheorem{theorem}{Theorem}[section]
\newtheorem{lemma}{Lemma}
\newtheorem{corollary}{Corollary}
\newtheorem{conjecture}{Conjecture}
\newtheorem*{conjecture*}{Conjecture}
\newtheorem{case}{Case}
\newtheorem{subcase}{Subcase}[case]
\newtheorem{claim}{Claim}
\newtheorem{problem}{Problem}
\theoremstyle{definition}
\newtheorem{remark}{Remark}
\newcounter{Hcase}
\newcounter{Hclaim}
\newcommand{\etal}{et~al.\ }
\newcommand{\ie}{i.e.,\ }
\def\int(#1){\mathrm{int}(#1)}
\def\ext(#1){\mathrm{ext}(#1)}
\def\Int(#1){\mathrm{Int}(#1)}
\def\Ext(#1){\mathrm{Ext}(#1)}
\def\ad(#1){\mathrm{ad}(#1)}
\def\mad(#1){\mathrm{mad}(#1)}
\def\la(#1){\mathrm{la}(#1)}
\newcommand{\Lfloor}{\left\lfloor}
\newcommand{\Rfloor}{\right\rfloor}
\newtheorem*{TCC}{Total Coloring Conjecture}
\begin{document}%
\title{Total coloring of 1-toroidal graphs of maximum degree at least 11 and no adjacent triangles}
\author{Tao Wang\footnote{{\tt Corresponding
author: wangtao@henu.edu.cn}}\\
{\small Institute of Applied Mathematics}\\
{\small Henan University, Kaifeng, 475004, P. R. China}}
\date{}
\maketitle
\begin{abstract}%
  A {\em total coloring} of a graph $G$ is an assignment of colors to the vertices and the edges of $G$ such that every pair of adjacent/incident elements receive distinct colors. The {\em total chromatic number} of a graph $G$, denoted by $\chiup''(G)$, is the minimum number of colors in a total coloring of $G$. The well-known Total Coloring Conjecture (TCC) says that every graph with maximum degree $\Delta$ admits a total coloring with at most $\Delta + 2$ colors. A graph is {\em $1$-toroidal} if it can be drawn in torus such that every edge crosses at most one other edge. In this paper, we investigate the total coloring of $1$-toroidal graphs, and prove that the TCC holds for the $1$-toroidal graphs with maximum degree at least~$11$ and some restrictions on the triangles. Consequently, if $G$ is a $1$-toroidal graph with maximum degree $\Delta$ at least~$11$ and without adjacent triangles, then $G$ admits a total coloring with at most $\Delta + 2$ colors.
\end{abstract}
\section{Introduction}
  All graphs considered are finite, simple and undirected unless otherwise stated. Let $G$ be a graph with vertex set $V$ and edge set $E$. We shall denote by $F(G)$ the set of faces of an embedded graph $G$. The {\em neighborhood} of a vertex $v$ in a graph $G$, denoted by $N_{G}(v)$, is the set of all the vertices adjacent to the vertex $v$, \ie $N_{G}(v) = \{\,u \in V(G) \mid uv \in E(G)\,\}$. The {\em degree} of a vertex $v$ in $G$, denoted by $\deg_{G}(v)$, is the number of edges of $G$ incident with $v$. We denote the minimum and maximum degree of vertices of $G$ by $\delta(G)$ and $\Delta(G)$, respectively. The {\em diamond graph} $K_{4}^{-}$ is the graph $K_{4}$ minus an edge. A graph property $\mathcal{P}$ is {\em deletion-closed} if $\mathcal{P}$ is closed under taking subgraphs. A graph is {\em diamond-free} if it contains no induced subgraph which is isomorphic to $K_{4}^{-}$. In an embedded graph $G$, the {\em degree} $\deg_{G}(f)$ of a face $f$ is the number of edges with which it is incident, cut edge being counted twice. A $d$-vertex, $d^{+}$-vertex and $d^{-}$-vertex is a vertex of degree $d$, at least $d$ and at most $d$, respectively. Analogously, a $d$-face, $d^{+}$-face and $d^{-}$-face is a face of degree $d$, at least $d$ and at most $d$, respectively.

  A {\em total coloring} of a graph $G$ is an assignment of colors to the vertices and the edges of $G$ such that every pair of adjacent/incident elements receive distinct colors. The {\em total chromatic number} of a graph $G$, denoted by $\chiup''(G)$, is the minimum number of colors in a total coloring of $G$. It is obvious that the total chromatic number of a graph $G$ has a trivial lower bound $\Delta(G) + 1$. For the upper bound, Behzad \cite{Behzad1965} raised the following well-known Total Coloring Conjecture (TCC):

\begin{TCC}
Every graph with maximum degree $\Delta$ admits a total coloring with at most $\Delta + 2$ colors.
\end{TCC}

The conjecture was verified in the case $\Delta =3$ by Rosenfeld \cite{MR0278995} and Vijayaditya \cite{MR0285447} independently and also by Yap \cite{MR0976059}. It was confirmed in the case $\Delta \in \{4, 5\}$ by Kostochka \cite{MR0453576, MR1425788}, in fact the proof holds for multigraphs. Regarding planar graphs, the conjecture was verified in the case $\Delta \geq 9$ by Borodin \cite{MR977440} and in the case $\Delta = 7$ by Sanders and Zhao \cite{MR1684286}; the case $\Delta = 8$ was a consequence of Vizing's theorem about planar graphs \cite{Vizing1965} and Four Color Theorem (for more details, see Jensen and Toft \cite{MR1304254}). Thus, the only remaining case for planar graphs is that of maximum degree six.

The following conjecture is equivalent to the TCC, but it is more suitable for proof by contradiction. Throughout the paper, we consider the following form of the TCC.
\begin{conjecture}\label{TCC*}%
Every graph with maximum degree at most $\Delta$ admits a total coloring with at most $\Delta + 2$ colors.
\end{conjecture}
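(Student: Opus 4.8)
The plan is to establish Conjecture~\ref{TCC*} for the family of graphs that is the actual target of this paper, namely $1$-toroidal graphs of maximum degree $\Delta \geq 11$ with no adjacent triangles, via the discharging method applied to a hypothetical minimal counterexample. Fix $\Delta \geq 11$ and suppose, toward a contradiction, that there is a graph in this family with $\chiup''(G) > \Delta + 2$; among all such graphs choose one $G$ minimizing $\abs{V(G)} + \abs{E(G)}$. By minimality $G$ is connected with $\delta(G) \geq 2$, and every graph obtained from $G$ by deleting an edge or a vertex admits a total $(\Delta+2)$-coloring. The whole argument then rests on showing that this minimal $G$ is forced to contain a local structure that, in fact, it cannot contain.

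The second step is to build a catalogue of \emph{reducible configurations}. The basic tool is an extension argument: delete a carefully chosen low-degree vertex or edge, total-color the resulting smaller graph by minimality, and then recolor the deleted element. For the uncolored vertex or edge one counts the \emph{forbidden} colors (those appearing on incident edges, on endpoints, and on neighboring elements) and checks that this count stays below $\Delta + 2$, leaving a free color; where a naive count fails, Kempe-type recoloring chains are used to liberate a color. Typical conclusions are that two vertices of small degree cannot be adjacent and that a vertex of small degree must have sufficiently many large-degree neighbors. The hypothesis of \emph{no adjacent triangles} is indispensable here, since it eliminates the dense local patterns — in particular those built around $K_4^-$ — that would otherwise block the extension.

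The third step transfers these lemmas to the $1$-toroidal drawing and runs the discharging. I would first planarize by replacing each crossing with a degree-four \emph{false vertex}, yielding a graph embedded on the torus whose faces are well defined. Because the torus has Euler characteristic zero, a standard charge assignment such as $\deg(v) - 4$ to each vertex $v$ and $\deg(f) - 4$ to each face $f$ produces total charge $-4\bigl(\abs{V} - \abs{E} + \abs{F}\bigr) = 0$, rather than the $-8$ available in the plane. One then designs discharging rules that send charge from high-degree vertices and large faces toward the low-degree vertices, short faces, and crossing vertices. The goal is to show that after discharging every element carries nonnegative charge while the total is still $0$, forcing equality to hold everywhere and thereby pinning $G$ to a configuration already ruled out in the second step — the contradiction that completes the proof.

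The hardest part, I expect, is precisely the loss of Euler ``budget'' on the torus. With total charge $0$ instead of $-8$, there is no slack to absorb the triangular faces and the near-crossing structures, so each reducible configuration and each discharging rule must be tuned so that triangles incident to crossings accumulate no negative charge. The \emph{no adjacent triangles} condition and the threshold $\Delta \geq 11$ are exactly what make this possible, and calibrating the reducibility lemmas to be strong enough to neutralize these near-crossing triangles, while keeping the charge bookkeeping consistent across the handle of the torus, is the crux of the argument.
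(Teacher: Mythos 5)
Your plan is essentially the paper's own proof of \autoref{MainResult}: you correctly scope the open conjecture to the restricted family, take a deletion-minimal counterexample, develop recoloring-based reducible configurations (the paper's (P1)--(P5)), planarize crossings into degree-four crossing vertices, and run discharging against the zero Euler budget of the torus, rightly identifying that lost slack as the crux. The deviations are minor implementation choices: the paper charges $\deg(v)-6$ to vertices and $2\deg(f)-6$ to faces rather than $\deg-4$ to both, augments the associated graph with ``new edges'' joining low-degree true vertices on big faces before discharging, and closes not by forcing all charges to zero and invoking the reducible catalogue, but by exhibiting one element of strictly positive final charge --- a vertex of degree at least $7$, guaranteed by (P1) and $\Delta \geq 11$, kept positive through the semi-fan averaging argument.
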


\section{Preliminary}
  A {\em $\kappa$-deletion-minimal} graph with respect to total coloring, is a graph with maximum degree at most $\kappa - 1$ such that its total chromatic number is greater than $\kappa$, but the total chromatic number of every proper subgraph is at most $\kappa$. A $\kappa$-deletion-minimal graph $G$ with $\kappa \geq \Delta(G) + 2$ has the following structural results.
\begin{lemma}\label{DEDGE} 
If $u$ and $v$ are two adjacent vertices with $\deg_{G}(v) \leq \Lfloor \frac{\kappa - 1}{2} \Rfloor$, then $\deg_{G}(u) + \deg_{G}(v) \geq \kappa + 1$.
\end{lemma}
\begin{proof}
  Suppose, to the contrary, that $\deg_{G}(u) + \deg_{G}(v) \leq \kappa$. By the minimality of $G$, the graph $G - uv$ admits a total coloring $\phi$ with at most $\kappa$ colors. Let $\varphi$ denote the coloring obtained from $\phi$ by removing the color of $v$. Since $\deg_{G - uv}(u) + \deg_{G - uv}(v) \leq \kappa - 2$, it is easy to extend $\varphi$ to the edge $uv$ by assigning an available color. Finally, we can assign a color to $v$ such that the resulting coloring is a total coloring since $2\deg_{G}(v) \leq \kappa - 1$.
\end{proof}
\begin{lemma}\label{MinimumDegree} 
The graph $G$ is $2$-connected and $\delta(G) \geq 3$.
\end{lemma}
\begin{proof}
It is obvious that $G$ is $2$-connected. If $v$ is a vertex of degree at most two, then \autoref{DEDGE} implies that every neighbor of $v$ has degree at least $\kappa + 1 - \deg_{G}(v) \geq \Delta + 1$, which is a contradiction. Thus, we have $\delta(G) \geq 3$.
\end{proof}
\begin{lemma}\label{SumDelta3}
If $u$ and $v$ are two adjacent vertices with $\deg_{G}(v) \leq \Lfloor \frac{\kappa - 1}{2} \Rfloor$ and $\deg_{G}(u) + \deg_{G}(v) \leq \kappa + 1$, then the edge $uv$ is not contained in any triangle in $G$.
\end{lemma}
\begin{proof}%
  Suppose that $uv$ is contained in a triangle $uvw$. By the minimality of $G$, the graph $G - uv$ admits a total coloring $\phi$ with at most $\kappa$ colors. Let $\pi$ denote the coloring obtained from $\phi$ by removing the color of $v$. Let $\mathcal{U}_{\pi}(v)$ denote the set of colors which are assigned to the edges incident with $v$, and let $\mathcal{U}_{\pi}(u)$ denote the set of colors which are assigned to the vertex $u$ or the edges incident with $u$. Suppose that $\{1, \dots, \kappa\}$ is not the union of $\mathcal{U}_{\pi}(v)$ and $\mathcal{U}_{\pi}(u)$. Hence, there exists a color $\theta$ which is missed at $u$ and $v$, assign $\theta$ to $uv$ and assign a suitable color to $v$, it yields a total coloring of $G$ with at most $\kappa$ colors, which is a contradiction. Therefore, the set $\{1, \dots, \kappa\}$ is the union of $\mathcal{U}_{\pi}(v)$ and $\mathcal{U}_{\pi}(u)$; in fact, it is the disjoint union of $\mathcal{U}_{\pi}(v)$ and $\mathcal{U}_{\pi}(u)$ since $|\mathcal{U}_{\pi}(v)| + |\mathcal{U}_{\pi}(u)| = \kappa$. Note that $\pi(wv) \notin \mathcal{U}_{\pi}(u)$. From the coloring $\pi$, remove the color on $wv$ and assign the color $\pi(wv)$ to $uv$, we obtain a total coloring $\psi$ of $G - wv$ except $v$. Let $\mathcal{U}_{\psi}(w)$ denote the set of colors which are assigned to the vertex $w$ or the edges incident with $w$ with respect to $\psi$. Similarly, we can prove that $\{1, \dots, \kappa\}$ is the union (not necessarily disjoint union) of $\mathcal{U}_{\pi}(v)$ and $\mathcal{U}_{\psi}(w)$. Therefore, we have $\mathcal{U}_{\pi}(u) \subseteq \mathcal{U}_{\psi}(w) \subseteq \mathcal{U}_{\pi}(w)$. In the coloring $\pi$, there is a color $\alpha \notin \mathcal{U}_{\pi}(u) \cup \mathcal{U}_{\pi}(w)$, reassigning $\alpha$ to $uw$ and assigning $\pi(uw)$ to $uv$, and giving a suitable color to $v$, yields a total coloring of $G$ with at most $\kappa$ colors, which derives a contradiction.
\end{proof}
\begin{lemma}\label{No3InTriangle}
If $v$ is a $3$-vertex and $\kappa \geq 7$, then $N_{G}(v)$ is an independent set \cite[Lemma 3]{MR3352881}.
\end{lemma}
\begin{proof}%
Similar result has been proved in \cite[Lemma 3]{MR3352881}. Here, we can directly apply \autoref{DEDGE} and \autoref{SumDelta3} to obtain it. 
\end{proof}

\begin{lemma}\label{No4InAdjacent}
If $v$ is a $4$-vertex and $\kappa \geq 9$, then no edge incident with $v$ is contained in two triangles \cite[Lemma 4]{MR3352881}.
\end{lemma}

  A graph is {\em $1$-embeddable} in a surface $S$ if it can be drawn in $S$ such that every edge crosses at most one other edge. In particular, a graph is {\em $1$-toroidal} if it can be drawn in torus such that every edge crosses at most one other edge; a graph is {\em $1$-planar} if it can be drawn in the plane such that every edge crosses at most one other edge. The concept of $1$-planar graph was introduced by Ringel \cite{MR0187232} in 1965, while he simultaneously colors the vertices and faces of a plane graph such that any pair of adjacent/incident elements receive distinct colors. Ringel \cite{MR0187232} proved that $1$-planar graphs are $7$-colorable, and conjectured that they are $6$-colorable, this conjecture was proved by Borodin \cite{MR832128, MR1333779}.

  Obviously, planar graphs are $1$-planar graphs and $1$-planar graph is an extension of planar graph in some sense. Zhang \etal \cite{MR3352881} proved the TCC holds for $1$-planar graphs with maximum degree at least $13$. For other various colorings of $1$-planar graphs, see \cite{MR1865580, MR2965951, MR2779909, MR2876230}. From the definitions, planar graphs and $1$-planar graphs are all $1$-toroidal graphs.

\begin{figure}%
\centering
\includegraphics{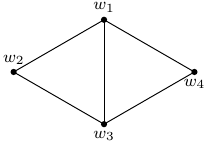}
\caption{The diamond graph $K_{4}^{-}$}
\label{Diamond}
\end{figure}
A graph $G$ has property $\mathcal{P}$, if it satisfies the following two conditions:
\begin{enumerate}[label=(\arabic*)]
\item every subgraph $K_{4}$ has at least one vertex of degree at most four;
\item every induced subgraph $K_{4}^{-}$ (see Fig~\ref{Diamond}) has $\min \{\deg_{G}(w_{1}), \deg_{G}(w_{3})\} \leq 5$ or $\min \{\deg_{G}(w_{2}), \deg_{G}(w_{4})\} \leq 3$.
\end{enumerate}

  Suppose that $K_{4}$ is a subgraph of $G - e$, thus it is also a subgraph of $G$ and it has at least one vertex of degree at most four in $G$ (also in $G - e$). Let $K_{4}^{-}$ be an induced subgraph of $G - e$ (see Fig~\ref{Diamond}). If it is also an induced subgraph of $G$, then it satisfies the condition~(2) for $G - e$. Suppose that its vertices induced a $K_{4}$ in $G$ and $e = w_{2}w_{4}$. Since this $K_{4}$ satisfies the condition~(1) for $G$, we may assume that one of its vertex $w$ has degree at most four in $G$. If $w \in \{w_{2}, w_{4}\}$, then $\deg_{G-e}(w) \leq 3$ and this $K_{4}^{-}$ satisfies the condition~(2) for $G - e$. If $w \in \{w_{1}, w_{3}\}$, then $\deg_{G-e}(w) \leq 4$ and this $K_{4}^{-}$ also satisfies the condition~(2) for $G - e$. Therefore, the property $\mathcal{P}$ is deletion-closed. 

  In this paper, we investigate the total coloring of $1$-toroidal graphs, and prove that the TCC holds for the $1$-toroidal graphs with property $\mathcal{P}$.

  Two triangles are {\em adjacent} if they have one common edge. Let $G$ be a graph drawn in a surface; if we treat all the crossing points as vertices, then we obtain an embedded graph $G^{\dagger}$, and call $G^{\dagger}$ {\em the associated graph of $G$}, call the vertices of $G$ {\em true vertices} and the crossing points {\em crossing vertices}.
\section{Total coloring}

\begin{theorem}\label{MainResult}%
Let $G$ be a $1$-toroidal graph with maximum degree at most $\Delta$, where $\Delta \geq 11$. If $G$ satisfies property $\mathcal{P}$, then $G$ admits a total coloring with at most $\Delta + 2$ colors.
\end{theorem}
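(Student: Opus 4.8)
The plan is to argue by contradiction via discharging on the torus. Suppose the theorem fails and let $G$ be a counterexample with the fewest edges. Since the property $\mathcal{P}$ is deletion-closed and the bound $\Delta(G)\le\Delta$ is preserved under edge deletion, every proper subgraph of $G$ is totally $(\Delta+2)$-colorable, so $G$ is $\kappa$-deletion-minimal with $\kappa=\Delta+2\ge 13$; hence properties \ref{DEDGE}--\ref{No4InAdjacent} all apply. Because $\lfloor(\kappa-1)/2\rfloor\ge 6$, property \ref{DEDGE} governs every $d$-vertex with $d\le 6$: each of its neighbours has degree at least $\kappa+1-d=\Delta+3-d$. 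In particular every neighbour of a $3$-vertex is a $\Delta^{+}$-vertex, and by \ref{No3InTriangle} the neighbourhood of a $3$-vertex is independent, so a $3$-vertex lies on no triangle.

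First I would fix a $1$-toroidal drawing of $G$ and pass to the associated graph $G^{\dagger}$ on the torus, in which every crossing becomes a crossing vertex of degree $4$; I may assume the embedding is cellular. The next step is to extract a list of \emph{reducible configurations} from \ref{DEDGE}--\ref{No4InAdjacent} together with property $\mathcal{P}$: a triangle carrying a small true vertex must have its remaining vertices of large degree; the $K_{4}$- and $K_{4}^{-}$-clauses of $\mathcal{P}$ forbid dense clusters of triangles except in tightly constrained degree patterns; and \ref{No3InTriangle}--\ref{No4InAdjacent} limit how crossings and triangular faces can accumulate around $3$- and $4$-vertices. These lemmas are what let the later charge count close.

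For the discharging, Euler's formula on the torus gives $V^{\dagger}-E^{\dagger}+F^{\dagger}=0$, so assigning initial charge $\mathrm{ch}(x)=\deg_{G^{\dagger}}(x)-4$ to each vertex and $\mathrm{ch}(f)=\deg(f)-4$ to each face makes the \emph{total} charge equal to $-4(V^{\dagger}-E^{\dagger}+F^{\dagger})=0$. Only $3$-vertices and triangular faces start negative (charge $-1$), while crossing vertices, $4$-vertices and $4$-faces are neutral. I would then send a bounded amount of charge---fractions such as $\tfrac12$ or $\tfrac13$, calibrated by the reducible configurations---from each $5^{+}$-vertex and each $5^{+}$-face to the incident triangles and the adjacent $3$-vertices, routing a donation across a crossing vertex whenever the relevant edge of $G$ is crossed, so that every recipient ends at charge $\ge 0$ while every large donor keeps charge $>0$.

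The main obstacle is precisely that the toroidal total is $0$ rather than the strictly negative $-8$ available in the plane, so the count has no slack: it is not enough to reach nonnegativity everywhere, since that would only force every final charge to be $0$. I must, in addition, exhibit at least one element whose final charge is strictly positive---most naturally a maximum-degree vertex, which starts with charge $\ge 7$ and, by the configuration lemmas, cannot be drained all the way to $0$---thereby contradicting a total of $0$. The delicate point throughout is the crossing vertices: being neutral and having no reserve, they must be prevented both from going negative and from being asked to donate, and it is exactly here that property $\mathcal{P}$ and the sparsity of triangles it enforces are used to keep the tight toroidal count from failing.
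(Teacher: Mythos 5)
Your overall frame---minimal counterexample, deletion-closedness of $\mathcal{P}$ giving $\kappa$-deletion-minimality and properties (P1)--(P5), a cellular $1$-toroidal drawing with crossings turned into degree-$4$ vertices, Euler's formula on the torus giving total charge $0$, and the consequent need to exhibit at least one element of strictly positive final charge---matches the paper. The gap is that everything after the setup is asserted rather than proved: you say charge will be sent in fractions ``such as $\tfrac12$ or $\tfrac13$, calibrated by the reducible configurations,'' so that every recipient reaches $0$ while every donor stays positive, but in a discharging proof that calibration and its exhaustive case verification \emph{are} the proof. The paper's verification is not routine and does not take place in $G^{\dagger}$ with charges $\deg-4$ on vertices and $\deg(f)-4$ on faces: it (i) builds an auxiliary embedded graph $G^{*}$ by adding ``new edges'' joining low-degree true vertices on big faces, (ii) uses the normalization $\deg_{G^{*}}(v)-6$ for vertices and $2\deg_{G^{*}}(f)-6$ for faces, under which $3$-faces are neutral and the deficit sits on vertices of degree at most five, (iii) proves structural claims (no $K_{4}$ in $G$, every big face is incident with big vertices, etc.), and (iv) runs roughly two dozen illustrated configurations plus a ``semi-fan'' averaging argument showing each vertex of degree at least $\Delta-2$ sends out on average at most $2/5$ per incident face, with a careful equality analysis at $\Delta=11$. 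None of this, nor any substitute for it, appears in your plan.

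To see that your sketch does not close as stated, consider a triangular face of $G^{\dagger}$ with vertex set $\{v,c,u\}$, where $v$ is a $3$-vertex of $G$, $c$ is a crossing vertex, and $u$ is a true neighbour of $v$ (so $\deg_{G}(u)=\Delta$ by (P1)). Such faces are not excluded by (P1)--(P5) or by $\mathcal{P}$, because $v$ lies on no triangle of $G$; the face arises from two edges crossing, not from a triangle of $G$. In your scheme this face has charge $-1$, and its only incident element with positive charge is $u$: the $3$-vertex is itself negative, and you have (correctly) forbidden crossing vertices from donating. A single $3$-neighbour $v$ of $u$ can create two such faces at $u$ (both edges of $v$ other than $uv$ may cross edges incident with $u$), and $u$ must additionally send $v$ its share $\tfrac13$. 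With three such $3$-neighbours the demand on $u$ is already $6\cdot 1+3\cdot\tfrac13=7=\Delta-4$ when $\Delta=11$, exhausting $u$'s budget exactly; one further incident triangle (say with two other $\Delta$-neighbours of $u$, each paying $\tfrac13$) pushes $u$ negative, and even in the exactly tight case there is no surplus left anywhere, whereas the zero toroidal total forces you to produce a strictly positive element. Escaping this requires either cross-routing rules (donations across the crossing from the far endpoints of the crossed edges, whose own budgets must then be re-verified) or structural control of big faces and low-degree vertices---precisely the machinery of the new edges and $G^{*}$ that the paper develops and your proposal lacks.
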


Consequently, we have the following corollaries.
\begin{corollary}
Let $G$ be a diamond-free $1$-toroidal graph with maximum degree at most $\Delta$, where $\Delta \geq 11$. If every subgraph $K_{4}$ has a vertex of degree at most four, then $G$ admits a total coloring with at most $\Delta + 2$ colors.
\end{corollary}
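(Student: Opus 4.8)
The plan is to argue by contradiction through a minimal counterexample and a discharging analysis on the torus. Suppose the theorem fails, and among all $1$-toroidal graphs with property $\mathcal{P}$ and maximum degree at most $\Delta$ whose total chromatic number exceeds $\Delta+2$, choose one, $G$, with the fewest edges. Since every subgraph of a $1$-toroidal graph is again $1$-toroidal and property $\mathcal{P}$ is deletion-closed, every proper subgraph of $G$ lies in the same class and hence, by minimality, is totally $(\Delta+2)$-colorable; thus $G$ is $\kappa$-deletion-minimal for $\kappa=\Delta+2$. Consequently $G$ inherits all of the structural restrictions (P1)--(P5): $\delta(G)\ge 3$, adjacent vertices cannot both be light (for $\kappa\ge 13$, a $d$-vertex with $d\le\lfloor(\kappa-1)/2\rfloor$ forces every neighbour to have degree at least $\kappa+1-d$), such tight edges avoid triangles, the neighbourhood of a $3$-vertex is independent, and no edge at a $4$-vertex lies in two triangles. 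Together with the two clauses of property $\mathcal{P}$ controlling $K_{4}$'s and diamonds, these give strong local constraints: every light true vertex is surrounded by heavy (near-$\Delta$) neighbours, and dense small subgraphs around light vertices are forbidden.

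Next I would fix a $1$-toroidal drawing of $G$, chosen to minimise the number of crossings, and pass to the associated graph $G^{\dagger}$ obtained by turning each crossing into a new $4$-valent \emph{crossing vertex}; $G^{\dagger}$ is embedded on the torus without crossings. Standard normalisation (two edges that cross share no endpoint, every pair of edges crosses at most once, and the crossing number is least possible) lets me assume the drawing is as clean as possible. Because the torus has Euler characteristic $0$, Euler's formula reads $|V(G^{\dagger})|-|E(G^{\dagger})|+|F(G^{\dagger})|=0$. I assign to every vertex $x$ and every face $f$ of $G^{\dagger}$ the initial charges $\mu(x)=\deg_{G^{\dagger}}(x)-4$ and $\mu(f)=\deg(f)-4$. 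Using $\sum_{v}\deg(v)=\sum_{f}\deg(f)=2|E(G^{\dagger})|$ one computes $\sum_{x}\mu(x)+\sum_{f}\mu(f)=-4\bigl(|V(G^{\dagger})|-|E(G^{\dagger})|+|F(G^{\dagger})|\bigr)=0$, so the total charge is exactly $0$. Note that crossing vertices start with charge $0$ and so act only as relay points.

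I would then design discharging rules that move charge from heavy true vertices and large faces toward light true vertices and triangular faces, routing appropriately across crossing vertices. The structural facts above are exactly what is needed to certify that, after discharging, every true vertex, every crossing vertex, and every face has nonnegative final charge: a light vertex receives enough from its forced heavy neighbours, a triangular face is paid for by its incident high-degree corners, and the property-$\mathcal{P}$ restrictions prevent the bad accumulations of triangles and diamonds around crossings that would otherwise leave a deficit. Verifying nonnegativity is a long case analysis organised by the degree of the vertex or face and by whether crossings are incident, and this bookkeeping is where most of the routine work lies.

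The genuine obstacle, and the feature distinguishing the toroidal case from the planar one, is that the total charge is $0$ rather than strictly negative, so mere nonnegativity of the final charges is not yet a contradiction. To close the argument I would strengthen the discharging so that some element is guaranteed a strictly positive surplus, or else analyse the extremal situation in which every final charge is forced to be exactly $0$. The latter pins $G^{\dagger}$ down to a rigid configuration --- essentially a highly regular quadrangulation/triangulation of the torus in which every inequality used in the discharging holds with equality --- and I would then show that such a configuration either violates property $\mathcal{P}$ or one of (P1)--(P5), or else admits a total $(\Delta+2)$-colouring directly, contradicting the choice of $G$ in every case.
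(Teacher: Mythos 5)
There is a genuine gap, and also a missed shortcut. This statement is a corollary of Theorem~3.1, and the paper's entire proof of it is one observation: since $G$ is diamond-free, it has no induced $K_{4}^{-}$ at all, so condition~(2) of property $\mathcal{P}$ holds vacuously, while condition~(1) is literally the hypothesis that every subgraph $K_{4}$ has a vertex of degree at most four; hence $G$ satisfies $\mathcal{P}$ and Theorem~3.1 applies directly. You never make this (one-line) reduction explicit. Instead you set out to re-prove Theorem~3.1 itself from scratch, which is a legitimate route in principle --- and you correctly sidestep the trap that diamond-freeness is \emph{not} deletion-closed by taking your minimal counterexample within the class of graphs satisfying $\mathcal{P}$ --- but then the burden is to actually supply that proof, and your proposal does not.

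Concretely, three essential components are missing. First, you never state any discharging rules: ``design discharging rules that move charge from heavy true vertices and large faces toward light true vertices'' is a description of the goal, not a rule; the paper needs rules (R1)--(R4) together with the two dozen configurations in figures (a)--(x), plus the auxiliary construction of $G^{*}$ (adding ``new edges'' inside big faces) and Claims~1--4, before any verification can even begin. Second, the case analysis certifying nonnegativity of final charges --- which you defer as ``routine bookkeeping'' --- is the bulk of the paper and depends on the precise rules, so it cannot be waved through. Third, and most seriously, you correctly identify that on the torus the total charge is exactly $0$, so nonnegativity everywhere is not a contradiction, but you then leave the resolution as a disjunction of two unexecuted strategies (``strengthen the discharging \dots\ or else analyse the extremal situation''). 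The paper resolves this by a specific argument: the semi-fan analysis shows every vertex of degree at least $\Delta-2$ sends out average charge at most $2/5$ per incident face and ends with \emph{strictly} positive charge, and by (P1) together with $\Delta \geq 11$ at least one vertex of degree at least $7$ exists, so the total final charge is strictly positive --- contradiction. Without an executed version of this step, no contradiction is ever reached, so the proposal as written is a plan rather than a proof.
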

\begin{corollary}%
Let $G$ be a $1$-toroidal graph with maximum degree at most $\Delta$, where $\Delta \geq 11$. If $G$ has no adjacent triangles, then $G$ admits a total coloring with at most $\Delta + 2$ colors.
\end{corollary}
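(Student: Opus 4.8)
The plan is to derive this corollary directly from Theorem~\ref{MainResult}, so the whole task reduces to checking that the hypothesis ``no adjacent triangles'' forces $G$ to satisfy property $\mathcal{P}$. The key observation is structural: each of the two configurations governed by property $\mathcal{P}$, namely a $K_{4}$ subgraph and a $K_{4}^{-}$ subgraph, contains a pair of triangles that share a common edge. Indeed, in $K_{4}$ on vertices $a, b, c, d$ the triangles $abc$ and $abd$ share the edge $ab$, and in a diamond $K_{4}^{-}$ (Fig.~\ref{Diamond}) the triangles $w_{1}w_{2}w_{3}$ and $w_{1}w_{3}w_{4}$ share the edge $w_{1}w_{3}$. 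Hence any copy of $K_{4}$ or $K_{4}^{-}$ in $G$ immediately yields two adjacent triangles.

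From this I would argue by contraposition at the subgraph level: since $G$ has no adjacent triangles, $G$ contains no $K_{4}$ subgraph and no $K_{4}^{-}$ subgraph whatsoever, whether induced or not. In particular $G$ has no induced $K_{4}^{-}$. Therefore condition~(1) of property $\mathcal{P}$ (every $K_{4}$ subgraph has a vertex of degree at most four) and condition~(2) (every induced $K_{4}^{-}$ satisfies the stated degree inequality) both hold vacuously, and so $G$ satisfies property $\mathcal{P}$.

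It then remains only to apply Theorem~\ref{MainResult}: $G$ is $1$-toroidal with maximum degree at most $\Delta \geq 11$ and has property $\mathcal{P}$, hence it admits a total coloring with at most $\Delta + 2$ colors. The genuine mathematical content is entirely absorbed into Theorem~\ref{MainResult} (the deletion-minimality analysis and the discharging argument behind it), so for the corollary there is no real obstacle. The only point deserving a moment's care is the triangle-sharing observation, together with the remark that ruling out \emph{all} $K_{4}^{-}$ subgraphs is even stronger than ruling out the induced ones demanded by condition~(2); this is what makes both defining conditions of $\mathcal{P}$ collapse to vacuous statements.
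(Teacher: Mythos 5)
Your proposal is correct and takes exactly the route the paper intends: the paper states this corollary as an immediate consequence of \autoref{MainResult} (prefaced only by ``Consequently''), the implicit justification being precisely that a $K_{4}$ or a $K_{4}^{-}$ always contains two triangles sharing an edge, so a graph with no adjacent triangles has no $K_{4}$ subgraph and no induced $K_{4}^{-}$, making both conditions of property $\mathcal{P}$ vacuously true. Your triangle-sharing observation and the remark that excluding all $K_{4}^{-}$ subgraphs is stronger than excluding induced ones are exactly the right (and complete) justification.
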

  We prove the \autoref{MainResult} by contradiction. Let $G$ be a counterexample to the theorem with $|V| + |E|$ is minimum, and fix $\kappa = \Delta + 2$. We also assume that it has been $2$-cell $1$-embedded in the plane/torus (that is, every face of its associated graph is homeomorphic to an open disk). Since the property $\mathcal{P}$ is deletion-closed and every proper subgraph of $G$ is also a $1$-toroidal graph, it follows that $G$ is a $\kappa$-deletion-minimal graph. Let $G^{\dagger}$ be the associated graph of $G$. It is easy to see that $G^{\dagger}$ is also $2$-connected and every face boundary walk is a cycle of $G^{\dagger}$. If there exists a $4^{+}$-face $f$ with two discontinuous true vertices on the boundary walk and these two vertices have degree at most five in the current graph, then we add a line linking these two vertices in the face $f$, and call this line a {\em new edge}. We constantly add new edges one by one, and obtain an embedded graph $G^{*}$. The aim of adding new edges is to partition "big" faces into "smaller" faces such that no two discontinuous true vertices on the boundary walk has "small" degree (at most five). By the construction, every face boundary walk of $G^{*}$ is a cycle and the maximum degree of $G^{*}$ is still at most $\Delta$. Note that maybe $G^{*}$ has multiple edges, but every face is a $3^{+}$-face. If $e_{1}$ and $e_{2}$ are multiple edges, then both are new edges; otherwise, one of them is a new edge and the other is an edge of $G$, which contradicts \autoref{DEDGE}. We notice that the crossing vertices are independent in $G^{*}$.

  A vertex in $G^{*}$ is called a {\em $(d_{1}, d_{2})$-vertex}, if it has degree $d_{1}$ in $G$ and $d_{2}$ in $G^{*}$. A vertex $v$ is called {\em big} if it is a $(3, 5)$-vertex or $\deg_{G^{*}}(v) \geq 6$; otherwise, it is called a {\em small} vertex (including the crossing vertices).

By Euler's formula, we have
\begin{equation}\label{*}%
\sum_{v \in V(G^{*})}(\deg_{G^{*}}(v)-6) + \sum_{f \in F(G^{*})}(2\deg_{G^{*}}(f) - 6) = -12\textrm{ or }0.
\end{equation}

  We will use the discharging method to complete the proof. The initial charge of every vertex $v$ is $\deg_{G^{*}}(v)-6$, and the initial charge of every face $f$ is $2\deg_{G^{*}}(f) - 6$. It follows that the sum of charge of vertices and faces is at most zero by \eqref{*}. We then transfer some charge from the $4^{+}$-faces and some big vertices to small vertices, such that the final charge of every small vertex becomes nonnegative and the final charge of every big vertex and face remains nonnegative, but there is at least one element's final charge is positive, and thus the sum of the final charge of vertices and faces is positive, which derives a contradiction.
\begin{claim}\label{No4Clique}
There is no four vertices induced a $K_{4}$ in $G$.
\end{claim}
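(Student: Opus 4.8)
The plan is to argue by contradiction, leaning entirely on the structural restrictions already established for the minimal counterexample. Suppose $G$ contains four pairwise adjacent vertices $a, b, c, d$, so that $\{a, b, c, d\}$ spans a copy of $K_{4}$ in $G$. Since $G$ satisfies property $\mathcal{P}$, condition~(1) guarantees that this $K_{4}$ has a vertex of degree at most four in $G$; call it $a$ without loss of generality. By (P2) we have $\delta(G) \geq 3$, so $\deg_{G}(a) \in \{3, 4\}$. The strategy is then to rule out each of these two degrees separately, invoking (P4) and (P5) respectively, both of which apply here because $\kappa = \Delta + 2 \geq 13$ comfortably exceeds the thresholds $7$ and $9$ that those properties require.

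First I would treat the case $\deg_{G}(a) = 3$. Then $N_{G}(a) = \{b, c, d\}$, and since $a, b, c, d$ are pairwise adjacent, the three vertices $b, c, d$ are themselves pairwise adjacent. Hence $N_{G}(a)$ is not an independent set, which directly contradicts (P4). Next I would treat the case $\deg_{G}(a) = 4$. Choose any edge of the $K_{4}$ incident with $a$, say $ab$. Because $a, b, c, d$ form a $K_{4}$, the edges $ac, bc, ad, bd$ are all present, so the edge $ab$ lies in the two distinct triangles $abc$ and $abd$. Thus an edge at the $4$-vertex $a$ is contained in two triangles, contradicting (P5). Either way we reach a contradiction, so $G$ contains no four pairwise adjacent vertices.

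I do not expect a genuine obstacle in this claim: it is essentially a bookkeeping consequence of combining the degree bound supplied by property~$\mathcal{P}$ with the local restrictions (P2), (P4), and (P5) proved earlier. The only points requiring a moment's care are confirming that the low-degree vertex furnished by condition~(1) cannot have degree below $3$ (which is exactly what (P2) provides) and checking that $\kappa$ is large enough for (P4) and (P5) to be in force (guaranteed by $\Delta \geq 11$).
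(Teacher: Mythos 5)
Your proposal is correct and follows essentially the same route as the paper: invoke condition (1) of property $\mathcal{P}$ to obtain a vertex of degree at most four in the $K_{4}$, then contradict \ref{No3InTriangle} (P4) in the degree-$3$ case and \ref{No4InAdjacent} (P5) in the degree-$4$ case. Your explicit appeal to (P2) to exclude degree below three and your check that $\kappa \geq 13$ clears the thresholds for (P4) and (P5) are details the paper leaves implicit, but the argument is the same.
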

\begin{proof}
  Suppose that $\{v_{1}, v_{2}, v_{3}, v_{4}\}$ induces a $K_{4}$ in $G$. By the hypothesis of the theorem, there exists a vertex, says $v_{1}$, has degree at most four. If $\deg_{G}(v_{1}) = 3$, then $v_{1}$ is contained in a triangle $v_{1}v_{2}v_{3}$, which contradicts \autoref{No3InTriangle}. If $\deg_{G}(v_{1}) = 4$, then the edge $v_{1}v_{3}$ is contained in two adjacent triangles in $G$, which contradicts \autoref{No4InAdjacent}.
\end{proof}

\begin{claim}\label{OneBigVertex}%
Let $uvw$ be on the face boundary walk of a $4^{+}$-face of $G^{*}$. If $u$ is a true vertex of degree at most five and $uv$ is not a new edge, then at least one of $v$ and $w$ is a big vertex in $G^{*}$.
\end{claim}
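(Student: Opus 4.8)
The plan is to argue by contradiction: assume that both $v$ and $w$ are small. The three ingredients I would use are property~\ref{DEDGE}, the fact that the crossing vertices form an independent set in $G^{*}$, and the \emph{saturation} of the new-edge construction, namely that since $G^{*}$ is obtained by recurrently adding new edges until no more can be added, no big face of $G^{*}$ carries two \emph{discontinuous} true vertices that are both of degree at most five (otherwise one further new edge could be inserted). Since $\kappa=\Delta+2\ge 13$, every true vertex of degree at most five has degree at most $\lfloor(\kappa-1)/2\rfloor$, so property~\ref{DEDGE} applies to any edge of $G$ joining two such vertices and forces their degree-sum to be at least $\kappa+1\ge 14$; as the sum of two degrees each at most five is at most $10$, \emph{no} edge of $G$ joins two true vertices of degree at most five.

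First I would dispose of the possibility that $v$ is a true vertex. In that case $uv$ joins two true vertices and, not being a new edge, is an ordinary edge of $G$. Since $v$ is small we have $\deg_{G^{*}}(v)\le 5$, hence $\deg_{G}(v)\le 5$, while $\deg_{G}(u)\le 5$ by hypothesis (degree at most five in $G^{*}$, hence in $G$, since $\deg_{G}\le\deg_{G^{*}}$ for true vertices). This is exactly the forbidden configuration from the previous paragraph, so we reach a contradiction and conclude that $v$ must be a crossing vertex.

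Next, because the crossing vertices are independent in $G^{*}$, the boundary-neighbour $w$ of $v$ is a true vertex; and as $w$ is small, $\deg_{G}(w)\le\deg_{G^{*}}(w)\le 5$. Now $u$ and $w$ are two true vertices of degree at most five lying on the boundary of the big face $f$ and separated by the crossing vertex $v$, so they are discontinuous. If $uw\in E(G)$, then again this is an edge of $G$ between two true vertices of degree at most five, which is impossible by property~\ref{DEDGE}. If $uw\notin E(G)$, then $u$ and $w$ are two discontinuous true vertices of degree at most five on the big face $f$, so the construction would be entitled to add a further new edge $uw$ inside $f$ (a parallel pair between two new edges being permitted), contradicting the saturation of $G^{*}$. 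Either way we reach a contradiction, so at least one of $v$ and $w$ is big.

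The step I expect to be the main obstacle is the careful bookkeeping around the new-edge construction: one must check that the hypothesis ``$u$ is a true vertex of degree at most five'' feeds correctly into both the degree bound that property~\ref{DEDGE} needs (degrees measured in $G$) and the eligibility of $u$ as an endpoint of a new edge, and that ``discontinuous'' is precisely the condition under which the construction may insert the chord $uw$. The remaining points are routine: verifying that $uv\in E(G)$ when $v$ is true (which uses only that $uv$ is not a new edge and that both endpoints are true), and that $\deg_{G}\le\deg_{G^{*}}$ for true vertices so that smallness transfers the degree bound from $G^{*}$ to $G$.
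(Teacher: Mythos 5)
Your proof is correct and takes essentially the same route as the paper's: when $v$ is a true vertex, property \ref{DEDGE} forces $\deg_{G^{*}}(v)\geq \Delta-2\geq 9$, and when $v$ is a crossing vertex, the independence of crossing vertices makes $w$ a true vertex and the saturation of the new-edge construction forces $w$ to be big. The only difference is presentational: you argue by contradiction and add a separate sub-case for $uw\in E(G)$ (handled again by \ref{DEDGE}), a point the paper's one-line appeal to ``the construction of $G^{*}$'' leaves implicit.
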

\begin{proof}%
If $v$ is a true vertex, then $uv \in E(G)$ and $\deg_{G^{*}}(v) \geq \Delta - 2 \geq 9$ by \autoref{DEDGE}. So we may assume that $v$ is a crossing vertex and $w$ is a true vertex. By the construction of $G^{*}$, the vertex $w$ is a big vertex.
\end{proof}

\begin{figure}[!htb]%
\centering
\subcaptionbox{\label{fig:subfig:a}}{\includegraphics{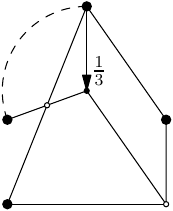}}\hfill~
\subcaptionbox{\label{fig:subfig:b}}{\includegraphics{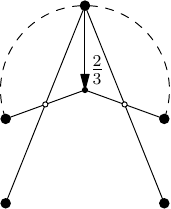}}\hfill~
\subcaptionbox{\label{fig:subfig:c}}{\includegraphics{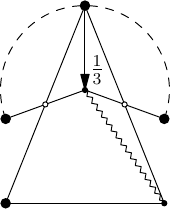}}\hfill~
\subcaptionbox{\label{fig:subfig:d}}{\includegraphics{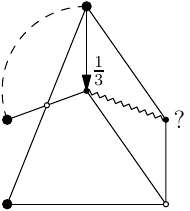}}
\caption{Discharging rules}
\label{fig:contfig:one}
\end{figure}

{\bf The Discharging Rules:}
\begin{enumerate}[label=(R\arabic*)]%
\item Every $\Delta$-vertex which is adjacent to some $(3, 3^{+})$-vertices of $G^{*}$ sends $1/2$ to a particular $\Delta$-vertex $v_{0}$, and every $(3, 3^{+})$-vertex of $G^{*}$ receives $1$ from the vertex $v_{0}$ (no matter whether these two vertices are adjacent).
\item Every $4^{+}$-face sends its redundant charge equally to its incident small vertices.
\item If $v$ is a $(5, 5)$-vertex and it is incident with five $3$-faces, then $v$ receives $1/3$ from each of its true neighbors.
\item All the other discharging rules are illustrated in figures (a)--(x); note that the dashed line denotes the two vertices are nonadjacent, the wavy line denotes the ``new edge'', and \includegraphics{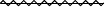} denotes the edge that we do not know whether it is a new edge; the solid dot labeled with "?" means that we cannot determine whether it is big or small vertex, the big solid dot denotes big true vertex, the small solid dot denotes small true vertex and the hollow dot denotes crossing vertex.
\end{enumerate}
\begin{claim}\label{BigFace}%
  Let $uvw$ be on the face boundary walk of a $4^{+}$-face $f$. Suppose that $v$ is a true vertex of degree at most five and neither $uv$ nor $vw$ is a new edge. If $f$ is a $4$-face, then $v$ receives at least $1$ from $f$, unless both $u$ and $w$ are crossing vertices and $v$ receives $2/3$ from $f$. If $f$ is a $5^{+}$-face, then $v$ receives at least $4/3$ from it.
\end{claim}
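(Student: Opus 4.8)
The plan is to reduce the statement to a counting problem. By rule (R2) the big face $f$ distributes its charge $2\deg_{G^{*}}(f)-6$ equally among the small vertices incident with it, so if $s(f)$ denotes the number of such small vertices then each of them, and in particular $v$ (which is one of them), receives exactly $(2\deg_{G^{*}}(f)-6)/s(f)$. Hence it suffices to bound $s(f)$ from above in each case, and the whole proof amounts to pinning down which boundary vertices of $f$ can be small.

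The first and main step would be to show that $v$ is the \emph{only} small true vertex on $f$; every other boundary vertex is either big or a crossing vertex. A small true vertex has $G$-degree at most $5$. Consider the two neighbours $u,w$ of $v$ along the boundary. Since $uv$ and $vw$ are not new edges and $v$ is true, if $u$ (resp.\ $w$) were also true then $uv$ (resp.\ $vw$) would be an uncrossed edge of $G$ (an edge of $G^{\dagger}$ joining two true vertices is a genuine edge of $G$). As $\deg_{G}(v)\le 5\le\lfloor(\kappa-1)/2\rfloor$ because $\kappa=\Delta+2\ge 13$, Property~\ref{DEDGE} then forces $\deg_{G}(u)\ge\kappa+1-\deg_{G}(v)\ge\Delta-2\ge 9$, so $u$ would be big. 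Thus each of $u,w$ is big or a crossing vertex. For any other boundary vertex $y$, which is non-consecutive with $v$ on $f$: if $y$ were a small true vertex, then $v$ and $y$ would be two discontinuous true vertices of degree at most $5$ on the big face $f$, and the maximality of the new-edge construction would allow us to insert the new edge $vy$, a contradiction. Therefore no boundary vertex other than $v$ is a small true vertex, and since the crossing vertices are independent in $G^{*}$, the small vertices of $f$ are exactly $v$ together with an independent set of crossing vertices on the boundary cycle.

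It then remains to count. If $f$ is a $4$-face with boundary cycle $u,v,w,x$, the crossing vertices lie in $\{u,w,x\}$ and are independent in $C_{4}$, which leaves two possibilities: either both $u$ and $w$ are crossing vertices, in which case $x$, being adjacent to two crossing vertices and not a small true vertex, must be big, so $s(f)=3$ and $v$ receives $2/3$; or at most one of $u,w,x$ is a crossing vertex, so $s(f)\le 2$ and $v$ receives at least $1$. This is precisely the asserted dichotomy for $4$-faces. If $f$ is a $d$-face with $d\ge 5$, the crossing vertices form an independent set in $C_{d}$, so there are at most $\lfloor d/2\rfloor$ of them and $s(f)\le 1+\lfloor d/2\rfloor$; hence $v$ receives at least $(2d-6)/(1+\lfloor d/2\rfloor)$, a quantity I would verify (by the elementary monotonicity check, separating the parities of $d$) to be minimized over $d\ge 5$ at $d=5$, where it equals $4/3$.

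The step I expect to be the real obstacle is the one showing that $v$ is the unique small true vertex on $f$: it requires cleanly separating the two mechanisms at work and applying each to the right vertices, namely using Property~\ref{DEDGE} to rule out small true \emph{neighbours} joined to $v$ by honest (non-new) edges of $G$, and using the maximality of the new-edge construction to rule out small true vertices \emph{non-adjacent} to $v$ on $f$. The one point demanding care is the observation that a ``not new'' edge between two true vertices is an actual uncrossed edge of $G$, so that Property~\ref{DEDGE} is legitimately applicable; once this is in place, the $C_{4}$ enumeration yielding the $2/3$ exception and the inequality $(2d-6)/(1+\lfloor d/2\rfloor)\ge 4/3$ for $d\ge 5$ are routine.
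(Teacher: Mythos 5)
Your proof is correct, and it reaches the claim by a somewhat different route than the paper. The paper deduces everything from its \autoref{OneBigVertex}, applied once on each side of $v$: this yields at least one big vertex in $\{u, u'\}$ and at least one in $\{w, w'\}$, hence at least two big vertices on $f$, and (R2) then gives $v$ at least $(2\deg(f)-6)/(\deg(f)-2)$, which handles the $4$-face case (with the three-small-vertex exception when $u$ and $w$ are both crossing) and gives at least $4/3$ for $5^{+}$-faces. You never invoke that claim; instead you prove the stronger structural statement that $v$ is the \emph{only} small true vertex on $f$, using precisely the two mechanisms that underlie the paper's \autoref{OneBigVertex} --- property \ref{DEDGE} for boundary neighbours of $v$ joined to it by genuine (non-new, hence uncrossed) edges of $G$, and the maximality of the new-edge insertion for true vertices discontinuous with $v$ on $f$ --- and you then count small vertices directly via the independence of crossing vertices: there are at most $1+\lfloor\deg(f)/2\rfloor$ of them. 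For $\deg(f)\in\{4,5,6\}$ your count agrees with the paper's bound of $\deg(f)-2$ small vertices; for larger faces your bound $(2\deg(f)-6)/\bigl(1+\lfloor\deg(f)/2\rfloor\bigr)$ is strictly stronger, though the extra strength is never needed. What the paper's packaging buys is brevity, since its auxiliary claim is already proved and the present claim then takes three lines; what yours buys is a sharper picture of which boundary vertices can be small, which makes the $4$-face dichotomy (the $2/3$ exception occurring exactly when $u$ and $w$ are both crossing) completely transparent. One caveat, shared equally by the paper's own proof: for $v$ to receive anything under (R2) it must itself be a small vertex of $G^{*}$ (in particular not a $(3,5)$-vertex), so ``degree at most five'' in the claim has to be read accordingly; you use this implicitly when you place $v$ among the recipients of the face's charge, exactly as the paper does.
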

\begin{proof}%
  Suppose that $f = uvww'$ is a $4$-face. If both $u$ and $w$ are crossing vertices, then $f$ sends its redundant charge equally to three small vertices by \autoref{OneBigVertex} and (R2), and then $v$ receives $2/3$ from $f$. If  at least one vertex in $\{u, w\}$, says $u$, is a true vertex, then it is big, and additionally at least one vertex in $\{w, w'\}$ is a big vertex by \autoref{OneBigVertex}, and thus the vertex $v$ receives at least $1$ from $f$ by (R2).

  Suppose that $f$ is a $5^{+}$-face and $u'uvww'$ is on the face boundary walk of $f$. By \autoref{OneBigVertex}, at least one vertex in $\{u', u\}$ (similarly, at least one vertex in $\{w, w'\}$) is a big vertex, and then $f$ is incident with at least two big vertices. Hence, the vertex $v$ receives at least
\[
\frac{2\deg(f)-6}{\deg(f)-2} = 2 - \frac{2}{\deg(f)-2} \geq 2-2/3 = 4/3.\qedhere
\]
\end{proof}

From the discharging rules, we have the following claim.
\begin{claim}\label{Crossing0}%
  Let $w$ be a crossing vertex with a small neighbor $w_{1}$. If $w$ is incident with a $3$-face with face angle $w_{1}ww_{2}$ and $ww_{1}$ is incident with one $4^{+}$-face, then $w_{2}$ does not send charge to $w$.
\end{claim}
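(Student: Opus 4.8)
The plan is to argue that, among all the discharging rules, the only ones that could transfer charge from a true vertex to an adjacent crossing vertex are those collected in (R4), and then to check that the hypotheses forbid the particular configuration in which such a transfer is prescribed. First I would fix notation for the local picture around the crossing vertex $w$. Since $w$ arises from two crossing edges of $G$, it has degree exactly $4$ in $G^{*}$; write its neighbours in cyclic order as $w_{1}, w_{2}, w_{3}, w_{4}$, so that the $3$-face with angle $w_{1}ww_{2}$ sits between the edges $ww_{1}$ and $ww_{2}$, while the big face incident with $ww_{1}$ is the face lying between $ww_{4}$ and $ww_{1}$. Because the initial charge of $w$ is $\deg_{G^{*}}(w)-6 = -2$, rules (R1) and (R3) never send charge to $w$ (they concern $(3,*)$-vertices and $(5,5)$-vertices, respectively, and $w$ is neither). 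Hence any charge reaching $w$ comes either from an incident big face through (R2) or from a true neighbour through one of the configurations in (R4).

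Next I would isolate the role of the big face. By hypothesis $ww_{1}$ lies on a big face $f$, and since crossing vertices are small, (R2) makes $f$ donate part of its redundant charge to $w$. Thus $w$ is not starved of charge on the $ww_{1}$ side, and this is precisely the situation that a transfer from $w_{2}$ is meant to remedy. Concretely, I would go through each figure in (R4) in which a true vertex plays the role of $w_{2}$ and sends charge across a $3$-face to a crossing vertex, and verify that every such figure requires either that the third vertex of the $3$-face (the analogue of $w_{1}$) be a big vertex, or that the edge joining $w$ to that vertex be flanked by two $3$-faces rather than by a big face. Both requirements fail here: $w_{1}$ is small by assumption, and $ww_{1}$ is incident with the big face $f$.

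Consequently none of the rules in (R4) that would move charge from $w_{2}$ to $w$ is applicable, and since (R1)--(R3) cannot do so either, $w_{2}$ sends no charge to $w$. I expect the main obstacle to be the bookkeeping in the second step: the statement is really a compatibility check against the entire list of configurations (a)--(x), and the work lies in confirming, figure by figure, that ``$w_{1}$ small together with a big face on $ww_{1}$'' is incompatible with the preconditions of every rule depicting a $w_{2}\to w$ transfer. There is no structural shortcut past this enumeration; the hypotheses were chosen exactly so that the redundant charge supplied by $f$ through (R2) already meets the needs of $w$ on that side, which is why a further transfer from $w_{2}$ is, by the design of (R4), never triggered.
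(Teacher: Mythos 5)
Your proposal is correct and takes essentially the same route as the paper: the paper gives no written proof of this claim at all, asserting only that it follows ``from the discharging rules,'' which is exactly the rule-by-rule inspection you describe (ruling out (R1)--(R3) for a degree-$4$ crossing vertex and then checking the figure rules of (R4)). The dichotomy you identify---every rule sending charge from a true vertex $w_{2}$ to a crossing vertex requires either that the third vertex $w_{1}$ of the relevant $3$-face be big, or that the edge $ww_{1}$ be flanked by two $3$-faces rather than a big face---is precisely the pattern in the configurations of figures (j)--(q), so your enumeration goes through.
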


  From the discharging rules, the final charge of every face is nonnegative. So it suffices to consider the final charge of vertices in $G^{*}$. Let $v$ be an arbitrary vertex of $G^{*}$, we will analyze the vertex $v$ according to its degree.

  Suppose that $e_{0}, e_{1}, \dots, e_{t}$ are consecutive edges incident with a $(t + 1)^{+}$-vertex $x$ in counterclockwise order and $\deg(v) \geq \Delta - 2$, and the other end of $e_{i}$ is $x_{i}$ for $0 \leq i \leq t$. If both $x_{0}$ and $x_{t}$ receive $0$ from $x$ through $e_{0}$ and $e_{t}$ respectively, and $x_{i}$ receives positive charge from $x$ for $1 \leq i \leq t-1$, we call this local structure a {\em semi-fan with $t$ faces} and the vertex $x$ {\em center} of the semi-fan, call the edges $e_{i}$ {\em fan ribs}, and $e_{i-1}$ {\em precursor} of $e_{i}$ and $e_{i+1}$ {\em successor} of $e_{i}$. We show that the vertices receive charge from big vertices such that its final charge is nonnegative and in every semi-fan, the average charge sent out by the center is at most $2/5$, and then the final charge of every $(\Delta - 2)^{+}$-vertex is positive.

\begin{case}%
The vertex $v$ is a $(3, 3)$-vertex and $v_{1}, v_{2}, v_{3}$ are its neighbors.
\end{case}
If $v$ is incident with three $3$-faces, then \autoref{No4Clique} implies that one vertex in $\{v_{1}, v_{2}, v_{3}\}$ is a crossing vertex, thus the vertex $v$ must be contained in a triangle of $G$, which is a contradiction. Hence, the vertex $v$ is incident with at least one $4^{+}$-face.

\begin{subcase}%
Suppose that $v$ is incident with three $4^{+}$-faces. By \autoref{BigFace}, the vertex $v$ receives at least $2/3$ from each incident face, and then its final charge is at least $3 - 6 + 1 + 3 \times 2/3 = 0$ by (R1).
\end{subcase}

\begin{figure}[!htb]%
\ContinuedFloat
\centering
\subcaptionbox{\label{fig:subfig:e}}{\includegraphics{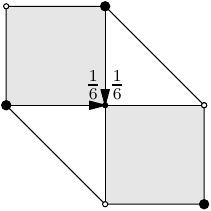}}\hfill~
\subcaptionbox{\label{fig:subfig:f}}{\includegraphics{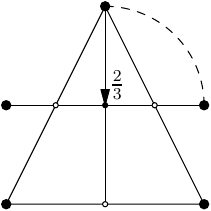}}\hfill~
\subcaptionbox{\label{fig:subfig:g}}{\includegraphics{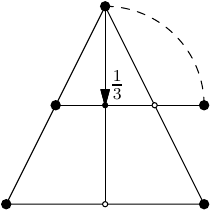}}\hfill~
\subcaptionbox{\label{fig:subfig:h}}{\includegraphics{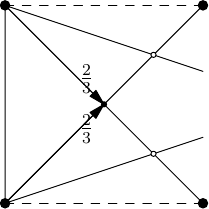}}\hfill~
\subcaptionbox{\label{fig:subfig:ii}}{\includegraphics{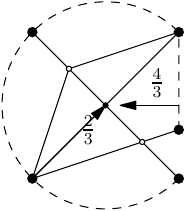}}\hfill~
\subcaptionbox{\label{fig:subfig:i}}{\includegraphics{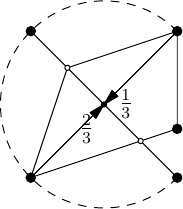}}\hfill~
\subcaptionbox{\label{fig:subfig:j}}{\includegraphics{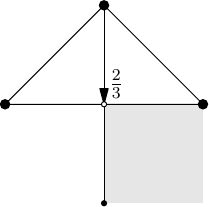}}\hfill~
\subcaptionbox{\label{fig:subfig:k}}{\includegraphics{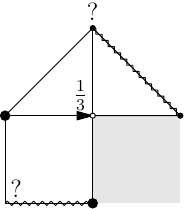}}\hfill~
\subcaptionbox{\label{fig:subfig:l}}{\includegraphics{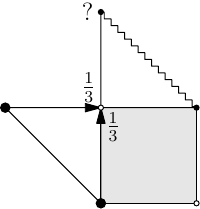}}
\label{fig:contfig:two}
\end{figure}

\begin{subcase}%
  Suppose that $v$ is incident with exactly two $4^{+}$-faces. By \autoref{BigFace}, if $v$ is incident with a $5^{+}$-face $f$, then it receives at least $4/3$ from $f$ and at least $2/3$ from the other $4^{+}$-face, and then its final charge is at least $3 - 6 + 1 + 4/3 + 2/3 = 0$. So we may assume that $v$ is incident with two $4$-faces and one $3$-face. We may assume that $v$ is incident with a $3$-face with face angle $v_{1}vv_{2}$ and $v_{1}$ is a true vertex. Thus, the vertex $v_{2}$ is a crossing vertex since $v$ is not contained in a triangle of $G$. By \autoref{BigFace}, if $v_{3}$ is a true vertex, then $v$ receives at least $1$ from each incident $4$-face, and its final charge is at least $3 - 6 + 1 + 2 \times 1 = 0$. So we may assume that $v_{3}$ is a crossing vertex, see Fig~\subref{fig:subfig:a}. By (R1) and \autoref{BigFace}, the final charge of $v$ is at least $3 - 6 + 1 + 1 + 2/3 + 1/3 = 0$. By \autoref{Crossing0}, we know that the vertex $v_{1}$ does not send  charge to $v_{2}$.
\end{subcase}

\begin{subcase}\label{subcase1-3}%
  Suppose that $v$ is incident with exactly one $4^{+}$-face $f$. By symmetry, assume that $f$ has a face angle $v_{2}vv_{3}$. If $v_{1}$ is a crossing vertex, then both $v_{2}$ and $v_{3}$ are true vertices, thus $v$ is contained in a triangle of $G$ induced by $\{v_{2}, v, v_{3}\}$, which contradicts \autoref{No3InTriangle}. So we may assume that $v_{1}$ is a true vertex but both $v_{2}$ and $v_{3}$ are crossing vertices for the same reason. Moreover, the $4^{+}$-face $f$ is a $5^{+}$-face; otherwise, there exist two multiple edges incident with $v_{1}$ in $G$, which is a contradiction. By (R1) and \autoref{BigFace}, the final charge of $v$ is at least $3 - 6 + 1 + 4/3 + 2/3 = 0$, see Fig~\subref{fig:subfig:b}. By \autoref{Crossing0}, neither $v_{2}$ nor $v_{3}$ receives charge from $v_{1}$.
\end{subcase}
  Let $v_{1}, v_{2}, \dots, v_{l}$ be consecutive neighbors of an $l$-vertex $v$ in counterclockwise order, and let $f_{i}$ be the incident face with face angle $v_{i}vv_{i-1}$, where the subtraction of subscript is taken modulo $l$.
\begin{case}%
  The vertex $v$ is a $(3, 4)$-vertex, that is, $v$ is a $4$-vertex in $G^{*}$ and it is incident with a new edge.
\end{case}
\begin{subcase}%
  If $v$ is incident with at least two $4^{+}$-faces, then the vertex $v$ receives at least 2/3 from each incident $4^{+}$-face, and thus its final charge is at least $4 - 6 + 1 + 2 \times 2/3 = 1/3$ by (R1).
\end{subcase}
\begin{subcase}%
  Suppose that $v$ is incident with exactly one $4^{+}$-face $f_{3}$. If $v$ receives at least $1$ from $f_{3}$, then its final charge is at least $4 - 6 + 1 + 1 = 0$. So we may assume that $v$ receives less than $1$ from $f_{3}$. By \autoref{OneBigVertex} and (R2), the face $f_{3}$ is a $4$-face incident with exactly one big vertex and it sends $2/3$ to $v$.

(1) Suppose that the new edge incident with $v$ is incident with $f_{3}$. By symmetry, we may assume that $vv_{3}$ is the new edge. By similar arguments as in subcase \ref{subcase1-3}, we may assume that $v_{1}$ is a true vertex and both $v_{2}$ and $v_{4}$ are crossing vertices, see Fig~\subref{fig:subfig:c}. By the discharging rules, the final charge of $v$ is $4 - 6 + 1+ 2/3 + 1/3 = 0$. By \autoref{Crossing0}, the vertex $v_{1}$ does not send charge to $v_{2}$.

(2) The new edge incident with $v$ is not incident with $f_{3}$. By \autoref{BigFace}, both $v_{2}$ and $v_{3}$ are crossing vertices, thus both $v_{1}$ and $v_{4}$ are true vertices, see Fig~\subref{fig:subfig:d}. By the discharging rules, the final charge of $v$ is $4 - 6 + 1+ 2/3 + 1/3 = 0$. By \autoref{Crossing0}, we also know that the vertex $v_{1}$ does not send charge to $v_{2}$.
\end{subcase}
\begin{subcase}%
  Suppose that $v$ is incident with four $3$-faces and $vv_{1}$ is the new edge incident with $v$. If $v_{3}$ is a crossing vertex, then both $v_{2}$ and $v_{4}$ are true vertices, and then $v$ is contained in a triangle induced by $\{v_{2}, v, v_{4}\}$ in $G$, which contradicts \autoref{No3InTriangle}. So we may assume that $v_{3}$ is a true vertex, thus \autoref{No3InTriangle} implies that both $v_{2}$ and $v_{4}$ are crossing vertices, but there are two multiple edges in $G$ with ends $v_{1}$ and $v_{3}$, which is a contradiction. Therefore, it is impossible to have four $3$-faces incident with $v$.
\end{subcase}

\begin{case}%
The vertex $v$ is a $(4, 4)$-vertex.
\end{case}

\begin{subcase}%
  Suppose that $v$ is incident with at least three $4^{+}$-faces. By \autoref{BigFace}, the vertex $v$ receives at least $2/3$ from each incident $4^{+}$-face, then its final charge is at least $4 - 6 + 3 \times 2/3 = 0$.
\end{subcase}
\begin{subcase}%
  Suppose that $v$ is incident with exactly two $4^{+}$-faces. If $v$ receives at least $1$ from each incident $4^{+}$-face, then its final charge is at least $4 - 6  + 2 \times 1 = 0$. By \autoref{BigFace}, we may assume that $v$ receives $2/3$ from one of its incident $4$-face with face angle $v_{3}vv_{4}$. Moreover, both $v_{3}$ and $v_{4}$ are crossing vertices. If $v$ receives at least $4/3$ from the other $4^{+}$-face, then its final charge is at least $4 - 6 + 2/3 + 4/3 = 0$. So we may assume that the other $4^{+}$-face $f$ sends at most $1$ to $v$. By \autoref{BigFace}, the face $f$ is also a $4$-face.

  Further, suppose that the two $4$-faces are nonadjacent. Thus, both $v_{1}$ and $v_{2}$ are true vertices. By (R2), the vertex $v$ receives at least $1$ from $f_{2}$; in fact, the vertex $v$ receives $1$ from $f_{2}$, it follows that $f_{2}$ is incident with one crossing vertex, see Fig~\subref{fig:subfig:e}. Hence, the final charge of $v$ is at least $4 - 6 + 1 + 2/3 + 2 \times 1/6 = 0$. By \autoref{Crossing0}, the vertex $v_{1}$ does not send charge to $v_{4}$; similarly, the vertex $v_{2}$ does not send charge to $v_{3}$.

  So we may assume that the two $4$-faces are adjacent. By symmetry, we may assume that $vv_{3}$ is incident with two $4$-faces. If $v_{2}$ is a crossing vertex, then the final charge of $v$ is $4 - 6 + 3 \times 2/3 = 0$, see Fig~\subref{fig:subfig:f}; if $v_{2}$ is a true vertex, then the final charge of $v$ is $4 - 6 + 1 + 2/3 + 1/3 = 0$, see Fig~\subref{fig:subfig:g}. By \autoref{Crossing0}, neither $v_{2}$ nor $v_{4}$ receives charge from $v_{1}$.
\end{subcase}
\begin{subcase}%
  Suppose that $v$ is incident with exactly one $4^{+}$-face having a face angle $v_{1}vv_{4}$. Firstly, assume that both $v_{2}$ and $v_{3}$ are true vertices. Thus both $v_{1}$ and $v_{4}$ are crossing vertices since $v$ is not contained in two adjacent triangles in $G$, see Fig~\subref{fig:subfig:h}. By the discharging rules, the final charge of $v$ is at least $4 - 6 + 3 \times 2/3 = 0$. Moreover, by \autoref{Crossing0}, we know that $v_{2}$ does not send charge to $v_{1}$; similarly,  the vertex $v_{3}$ does not send charge to $v_{4}$. In a semi-fan, if $v_{2}v$ or $v_{3}v$ is a fan rib, then the average charge sent out by the center vertex is $1/3$.

  Secondly, assume that one of $v_{2}$ and $v_{3}$, says $v_{2}$, is a crossing vertex. The vertices $v_{1}$ and $v_{3}$ are all true vertices since crossing vertices are independent. \autoref{No4InAdjacent} implies that $v_{4}$ is a crossing vertex. By \autoref{No4InAdjacent}, the crossing vertex $v_{2}$ is incident with two $4^{+}$-faces, and $v_{2}$ receives $0$ from its neighbors. By \autoref{Crossing0}, the crossing vertex $v_{4}$ also receives $0$ from $v_{3}$. In a semi-fan, if $v_{3}v$ is a fan rib, then the average charge sent out by the center $v_{3}$ is $1/3$.

(i) If $f_{1}$ is a $5^{+}$-face, then $v$ receives at least $4/3$ from $f_{1}$ by \autoref{BigFace}, and receives $2/3$ from $v_{3}$, and thus the final charge of $v$ is at least $4 - 6 + 4/3 + 2/3 = 0$, see Fig~\subref{fig:subfig:ii}.

(ii) If $v$ is incident with a $4$-face $f_{1} = v_{1}vv_{4}v^{*}$, then $v$ receives $1, 2/3$ and $1/3$ from $f_{1}, v_{3}$ and $v_{1}$ respectively, and then the final charge of $v$ is $4 - 6 + 1 + 2/3 + 1/3 = 0$, see Fig~\subref{fig:subfig:i}. Note that the vertex $v^{*}$ is a big vertex and $v_{1}$ sends $0$ to $v^{*}$. The crossing vertex $v_{2}$ receives at least $1$ from each incident $4^{+}$-face such that its final charge is nonnegative, thus neither $v_{1}$ nor $v_{3}$ sends charge to $v_{2}$. By \autoref{Crossing0}, the vertex $v_{3}$ does not send charge to $v_{4}$. So if $v_{3}$ or $v_{1}v$ is in a semi-fan, then the average charge sent out by the center is at most $1/3$. 
\end{subcase}
\begin{subcase}
  Suppose that $v$ is incident with four $3$-faces. By \autoref{No4InAdjacent}, the vertex $v$ is not contained in adjacent triangles of $G$, then $v$ is incident with at least two crossing vertices. Consequently, we may assume that $v_{2}$ and $v_{4}$ are crossing vertices since the crossing vertices are independent, and then both $v_{1}$ and $v_{3}$ are true vertices, but there are two multiple edges of $G$ with ends $v_{1}$ and $v_{3}$, a contradiction. Therefore, it is impossible to have four $3$-faces incident with $v$.
\end{subcase}

\begin{case}%
The vertex $v$ is a crossing vertex.
\end{case}
Clearly, all the neighbors of $v$ are true vertices.
\begin{subcase}%
  Suppose that $v$ is incident with at least three $4^{+}$-faces. Note that every $4^{+}$-face is incident with at least one big vertex, it follows that $v$ receives at least $2/3$ from each incident $4^{+}$-face. Therefore, the final charge of $v$ is at least $4 - 6 + 3 \times 2/3 = 0$.
\end{subcase}

\begin{subcase}%
  Suppose that $v$ is incident with exactly two $4^{+}$-faces. If $v$ receives at least $1$ from each incident $4^{+}$-face, then its final charge is at least $4 - 6 + 2 \times 1 = 0$. So we may assume that $v$ receives $2/3$ from its incident $4^{+}$-face $f_{4}$. Thus, the face $f_{4}$ is a $4$-face which is incident with exactly one big vertex. In particular, exactly one of $v_{3}$ and $v_{4}$ is a big vertex. If $v$ receives at least $4/3$ from the other $4^{+}$-face, then its final charge is at least $4 - 6 + 2/3 + 4/3 = 0$. So we may assume that $v$  receives less than $4/3$ from the other $4^{+}$-face.

(1) Suppose that the two $3$-faces are adjacent, says $f_{1}$ and $f_{2}$. First of all, we assume that $v_{3}$ is a small vertex. From the construction of $G^{*}$, we know that both $v_{2}$ and $v_{4}$ are big vertices. By \autoref{DEDGE}, we have $\deg_{G}(v_{1}) \geq \Delta - 2$. By the discharging rules, the vertex $v$ receives at least $2/3$ from each incident $4^{+}$-face and $2/3$ from $v_{1}$, thus the final charge of $v$ is at least $4 - 6 + 3 \times 2/3 = 0$, see Fig~\subref{fig:subfig:j}. In a semi-fan, if $v_{1}v$ is a fan rib, then the average charge sent out by the center $v_{1}$ is $1/3$.

  Next, we may assume that $v_{3}$ is a big vertex in $G^{*}$. Note that $v_{4}$ is a small vertex, and thus $v_{2}$ is a vertex of degree at least $\Delta -2$. Note that $f_{3}$ sends less than $4/3$ to $v$ and it is incident with at least two big vertices, thus $f_{3}$ is a $4$-face incident with exactly two big vertices and it sends $1$ to $v$, see Fig~\subref{fig:subfig:k}. Therefore, the final charge of $v$ is $4 - 6 + 1 + 2/3 + 1/3 = 0$.

(2) Suppose that the two $3$-faces are nonadjacent. It follows that $f_{1}$ and $f_{3}$ are the two $3$-faces. Suppose that one of $v_{1}v_{4}$ and $v_{2}v_{3}$ is a new edge. By symmetry, we may assume that $v_{1}v_{4}$ is a new edge, thus both $v_{2}$ and $v_{3}$ have degree at least $\Delta - 2$. By the discharging rules, the vertex $v$ receives at least $2/3$ from $f_{2}$ and $1/3$ from each of $v_{2}$ and $v_{3}$, thus its final charge is at least $4 - 6 + 2 \times 2/3 + 2 \times 1/3 = 0$, see Fig~\subref{fig:subfig:l}. Notice that $f_{4}$ is incident with only one big vertex $v_{3}$, thus $v^{*}$ is a crossing vertex and $v_{4}$ is a $(3, 4)$- or $(4, 5)$-vertex.

  By symmetry, we may assume that both $v_{1}v_{4}$ and $v_{2}v_{3}$ are edges of $G$. By the symmetry of $v_{3}$ and $v_{4}$, assume that $v_{4}$ is a small vertex. Hence, both $v_{1}$ and $v_{2}$ have degree at least $\Delta - 2$ and $v_{3}$ is a big vertex. By the discharging rules, the vertex $v$ receives at least $1$ from $f_{2}$ and $1/3$ from $v_{2}$, then its final charge is at least $4 - 6 + 1 + 2/3 + 1/3 = 0$, see Fig~\subref{fig:subfig:m}. Note that $f_{2}$ is a $4$-face with exactly two big vertices. In a semi-fan, if the center sends $1/3$ to such a crossing vertex $v$, then it sends out $0$ through its precursor or successor at it.
\end{subcase}
\begin{figure}%
\ContinuedFloat
\centering
\subcaptionbox{\label{fig:subfig:m}}{\includegraphics{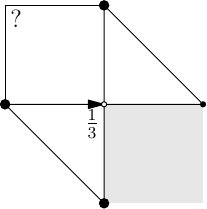}}\hfill~
\subcaptionbox{\label{fig:subfig:n}}{\includegraphics{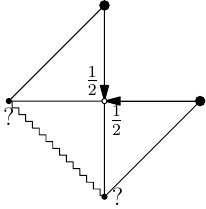}}\hfill~
\subcaptionbox{\label{fig:subfig:o}}{\includegraphics{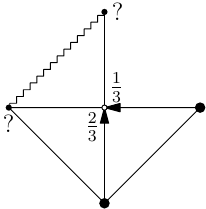}}\hfill~
\subcaptionbox{\label{fig:subfig:p}}{\includegraphics{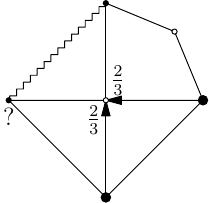}}
\end{figure}

\begin{subcase}%
Suppose that $v$ is incident with exactly one $4^{+}$-face. Without loss of generality, assume that $f_{1}$ is the $4^{+}$-face.

(1) Suppose that the edge $v_{2}v_{3}$ is a new edge. By \autoref{DEDGE}, both $v_{1}$ and $v_{4}$ have degree at least $\Delta - 2$. By the discharging rules, the vertex $v$ receives at least $1$ from $f_{1}$ and $1/2$ from each of $v_{1}$ and $v_{4}$, thus the final charge of $v$ is at least $4 - 6 + 1 + 2 \times 1/2 = 0$, see Fig~\subref{fig:subfig:n}.

(2) Suppose that one of $v_{1}v_{2}$ and $v_{3}v_{4}$ is a new edge. By symmetry, assume that $v_{1}v_{2}$ is a new edge. By \autoref{DEDGE}, both $v_{3}$ and $v_{4}$ have degree at least $\Delta -2$. If $v$ receives at least $1$ from $f_{1}$, then $v$ receives $2/3$ from $v_{3}$ and $1/3$ from $v_{4}$, and then its final charge is at least $4 - 6 + 1 + 2/3 + 1/3 = 0$, see Fig~\subref{fig:subfig:o}. So we may assume that $v$ receives less than $1$ from $f_{1}$. In fact, $f_{1}$ is a $4$-face incident with only one big vertex $v_{4}$. So we may assume that $f_{1} = v_{1}vv_{4}v^{*}$ and $v^{*}$ is a crossing vertex and $v_{1}$ is a $(3, 4)$- or $(4, 5)$-vertex, see Fig~\subref{fig:subfig:p}. Now, the vertex $v$ receives $2/3$ from $f_{1}$ and $2/3$ from each of $v_{3}$ and $v_{4}$, and then its final charge is $4 - 6 + 3 \times 2/3 = 0$. By the discharging rules, we can check that $v_{4}$ does not send charge to $v^{*}$. Hence, if $v_{4}v$ is a fan rib, then the average charge sent out by the center $v_{4}$ is $1/3$.

(3) The edges $v_{1}v_{2}, v_{2}v_{3}$ and $v_{3}v_{4}$ are all edges of $G$. By \autoref{No4Clique}, the vertices $v_{1}$ and $v_{4}$ are not adjacent in $G$, thus $\{v_{1}, v_{2}, v_{3}, v_{4}\}$ induces a $K_{4}^{-}$ in $G$. By \autoref{No3InTriangle} and the condition $\mathcal{P}$, one vertex in $\{v_{2}, v_{3}\}$, says $v_{2}$, is a vertex of degree at most five in $G$. Hence, each vertex in $\{v_{1}, v_{3}, v_{4}\}$ has degree at least $\Delta -2$. The vertex $v$ receives at least $1$ from $f_{1}$ and $1/3$ from each of $v_{1}, v_{3}$ and $v_{4}$, see Fig~\subref{fig:subfig:q}. Therefore, the final charge of $v$ is at least $4 - 6 + 1 + 3 \times 1/3 = 0$.
\end{subcase}
\begin{subcase}%
  Suppose that $v$ is incident with four $3$-faces. By \autoref{No4Clique}, one of the four $3$-faces is incident with a new edge. Suppose that $v_{1}v_{2}$ is a new edge. By \autoref{DEDGE}, both $v_{3}$ and $v_{4}$ have degree at least $\Delta - 2$, and $\{v_{1}, v_{2}, v_{3}, v_{4}\}$ induces a $K_{4}^{-}$ in $G$. By the condition~(2) in $\mathcal{P}$, we have that $\min\{\deg_{G}(v_{1}), \deg_{G}(v_{2})\} = 3$, which contradicts \autoref{No3InTriangle}. 
\end{subcase}
\begin{case}%
The vertex $v$ is a $(4, 5)$-vertex or $(5, 5)$-vertex.
\end{case}
\begin{subcase}%
  If $v$ is incident with at least two $4^{+}$-faces, then $v$ receives at least $2/3$ from each incident $4^{+}$-face, thus the final charge is at least $5 - 6 + 2 \times 2/3 = 1/3$.
\end{subcase}
\begin{figure}%
\ContinuedFloat
\centering
\subcaptionbox{\label{fig:subfig:q}}{\includegraphics{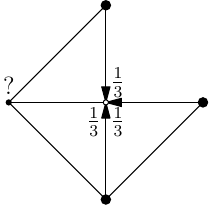}}\hfill~
\subcaptionbox{\label{fig:subfig:r}}{\includegraphics{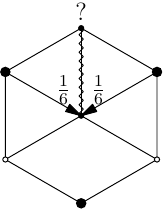}}\hfill~
\subcaptionbox{\label{fig:subfig:s}}{\includegraphics{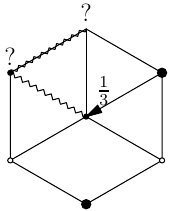}}\hfill~
\subcaptionbox{\label{fig:subfig:t}}{\includegraphics{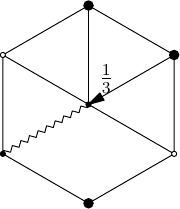}}\hfill~
\subcaptionbox{\label{fig:subfig:u}}{\includegraphics{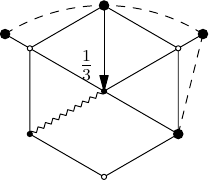}}\hfill~
\subcaptionbox{\label{fig:subfig:v}}{\includegraphics{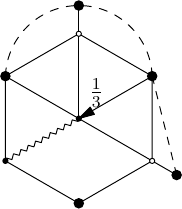}}\hfill~
\subcaptionbox{\label{fig:subfig:w}}{\includegraphics{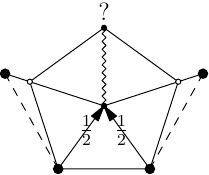}}\hfill~
\subcaptionbox{\label{fig:subfig:x}}{\includegraphics{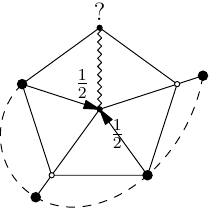}}
\end{figure}
\begin{subcase}%
  Suppose that $v$ is incident with exactly one $4^{+}$-face, says $f_{4}$. If $v$ receives at least $1$ from $f_{4}$, then the final charge is at least $5 - 6 + 1 = 0$. So we may assume that $v$ receives less than $1$ from $f_{4}$. By \autoref{OneBigVertex} and (R2), the face $f_{4}$ is a $4$-face incident with exactly one big vertex. Clearly, the vertex $v$ receives $2/3$ from $f_{4}$. 

  Firstly, assume that both $v_{3}$ and $v_{4}$ are crossing vertices, thus both $v_{2}$ and $v_{5}$ are true vertices. If neither $vv_{2}$ nor $vv_{5}$ is a new edge, then $v$ receives $1/6$ from each of $v_{2}$ and $v_{5}$, thus its final charge is $5 - 6 + 2/3 + 2 \times 1/6 = 0$, see Fig~\subref{fig:subfig:r}. By symmetry, if one of $vv_{2}$ and $vv_{5}$, says $vv_{2}$, is a new edge, then $v$ receives $1/3$ from $v_{5}$, and the final charge is $5 - 6 + 2/3 + 1/3 =0$, see Fig~\subref{fig:subfig:s}; note that we do not know whether $v_{1}$ is a true vertex. By the discharging rules, the vertex $v_{5}$ does not send charge to $v_{4}$.

  Secondly, assume that one of $v_{3}$ and $v_{4}$ is a true vertex. By \autoref{BigFace}, one of $vv_{3}$ and $vv_{4}$ is a new edge. By symmetry, we may assume that $vv_{3}$ is a new edge. If both $v_{1}$ and $v_{5}$ are true vertices, then the $4$-vertex $v$ in $G$ is contained in a triangle $v_{1}vv_{5}$, and \autoref{No4InAdjacent} implies that $v_{2}$ and $v_{4}$ are crossing vertices, thus the local structure is as illustrated in Fig~\subref{fig:subfig:t}; if $v_{1}$ is a true vertex and $v_{5}$ is a crossing vertex, then the local structure is as illustrated in Fig~\subref{fig:subfig:u}; if $v_{1}$ is a crossing vertex and $v_{5}$ is a true vertex, then the local structure is as illustrated in Fig~\subref{fig:subfig:v}. Anyway, the vertex $v$ receives $1/3$ from its neighbors in $G^{*}$, and the final charge of $v$ is $5 - 6 + 2/3 + 1/3 = 0$.
\end{subcase}

\begin{subcase}%
The vertex $v$ is incident with five $3$-faces.

(1) If $v$ is a $(5, 5)$-vertex, then at least three of its neighbors in $G^{*}$ are true vertices and have degree at least $\Delta - 2$, and then the final charge of $v$ is at least $5 - 6 + 3 \times 1/3 = 0$ by (R3).

(2) Suppose that $v$ is a $(4, 5)$-vertex and $vv_{1}$ is a new edge.

  If both $v_{3}$ and $v_{4}$ are true vertices, then both $v_{2}$ and $v_{5}$ are crossing vertices and the local structure is illustrated in Fig~\subref{fig:subfig:w}, the vertex $v$ receives $1/2$ from each of $v_{3}$ and $v_{4}$, and its final charge is $5 - 6 + 2 \times 1/2 = 0$.

  If one of $v_{3}$ and $v_{4}$, says $v_{3}$, is a crossing vertex, then the local structure is illustrated in Fig~\subref{fig:subfig:x}. The vertex $v$ receives $1/2$ from each of $v_{2}$ and $v_{4}$, thus the final charge is $5 - 6 + 2 \times 1/2 = 0$.
\end{subcase}

  From now on, we will check the final charge of every big vertex. If $v$ is a $(3, 5)$-vertex, then the final charge is $5-6+1=0$ by (R1). If $v$ has degree six, then the final charge is zero. If the degree of $v$ in $G^{*}$ belongs to $\{7, 8, \dots, \Delta - 3\}$, then its final charge is positive. The remaining case is that of degree at least $\Delta -2$. Note that $\Delta - 2 \geq 9$.

  Suppose that the center vertex sends out a $2/3$ through a fan rib in a semi-fan. If the center vertex sends a $2/3$ to a small vertex as illustrated in Fig~\subref{fig:subfig:b} or \subref{fig:subfig:f} or \subref{fig:subfig:h} or \subref{fig:subfig:i} or \subref{fig:subfig:j}, then we have proved that the average charge sent out by the center is $1/3$.

  Suppose that the center is the vertex $v_{3}$ in Fig~\subref{fig:subfig:o} or \subref{fig:subfig:p}. If $v_{3}$ does not send charge to $v_{2}$, then the average charge sent out by $v_{3}$ is $1/3$. So we may assume that $v_{3}$ sends a positive charge to the small vertex $v_{2}$. Thus, the vertex $v_{2}$ must be a $(3, 4)$- or $(4, 5)$-vertex; note that the small vertex $v_{2}$ is incident with a new edge. Note that $v_{2}$ is contained in a triangle $v_{2}v_{3}v_{4}$ of $G$, thus \autoref{No3InTriangle} implies that $v_{2}$ is a $(4, 5)$-vertex. By the discharging rules, the vertex $v_{3}$ sends $1/3$ to this $(4, 5)$-vertex (c.f. Fig~\subref{fig:subfig:s}), but $v_{3}$ sends out $0$ through its precursor or successor, hence the average charge sent out by the center is $(2/3+1/3)/3 = 1/3$.

  If the center is the vertex $v_{4}$ as illustrated in Fig~\subref{fig:subfig:p} and it sends out a $2/3$ to the crossing vertex, then we have proved that the average charge sent out by it is $1/3$.

  In what follows, we assume that the center does not send a $2/3$ through fan ribs.

  Suppose that the center sends out a $1/2$ through a fan rib. Assume that the center sends a $1/2$ to a crossing vertex. By symmetry, we may assume that the center is the vertex $v_{1}$ as illustrated in Fig~\subref{fig:subfig:n}. If $v_{1}$ sends a positive charge to $v_{2}$, then $v_{2}$ must be a small vertex, that is, it is a $(3, 4)$- or $(4, 5)$-vertex. If $v_{2}$ is a $(3, 4)$-vertex, then $v_{1}$ sends $1/3$ to $v_{2}$ and it sends $0$ through the precursor or successor (c.f. Fig~\subref{fig:subfig:c}). If $v_{2}$ is a $(4, 5)$-vertex and it is the vertex $v$ as illustrated in Fig~\subref{fig:subfig:s}, then the center sends $1/3$ to such a $(4, 5)$-vertex and sends $0$ through the precursor or successor. If $v_{2}$ is a $(4, 5)$-vertex and it is the vertex $v$ as illustrated in Fig~\subref{fig:subfig:u}, then the center sends $1/3$ to such a $(4, 5)$-vertex and sends $0$ through the precursor or successor. If $v_{2}$ is a $(4, 5)$-vertex and it is the vertex $v$ as illustrated in Fig~\subref{fig:subfig:w} or Fig~\subref{fig:subfig:x}, then the center sends $1/2$ to such a $(4, 5)$-vertex, but it sends $0$ to through the precursor or successor. By the above arguments, if $v$ sends out a $1/2$ through a fan rib, then it sends at most two $1/2$ between this fan rib and the big rib. Therefore, in a semi-fan, the center sends out at most four $1/2$, and then the average charge sent out by the center is at most $(4 \times 1/2)/5 = 2/5$, the equality holds if and only if the semi-fan contains five faces and the center sends out four $1/2$.

  Suppose that the center sends out a $1/2$ to a $(4, 5)$-vertex, but does not send charge to crossing vertices. By the discharging rules, the center sends out at most two $1/2$ in a semi-fan, and then the average charge sent out by the center is at most $(2 \times 1/2 + (k-3)\times 1/3)/k = 1/3$.

  If the center sends out at most $1/3$ through each fan ribs, then the average charge sent out by the center is less than $1/3$.

  If $v$ is a $(\Delta-2)$-vertex, then it only sends positive charge to crossing vertices or $(5, 5)$-vertices, then the average charge sent out by the center is at most $1/3$, the final charge of $v$ is at least $\Delta -2  - 6 - (\Delta -2)\times 1/3 \geq 0$; the equality holds if and only if $\Delta = 11$ and the average charge sent out by the center $v$ in every semi-fan is exactly $1/3$.

  If $v$ is a $(\Delta -1)$-vertex, then it only sends positive charge to crossing vertices or $(4, 4)$- or $(4, 5)$- or $(5, 5)$-vertices, thus the average charge sent out by the center is at most $2/5$, the final charge of $v$ is at least $(\Delta -1) - 6 - (\Delta - 1) \times 2/5 \geq 0$; the equality holds if and only if $\Delta = 11$ and the average charge sent out by the center in every semi-fan is $2/5$.

  If $v$ is a $\Delta$-vertex and it is not adjacent to $3$-vertex of $G$, then the final charge of $v$ is at least $\Delta - 6 - \Delta \times 2/5 >0$; if $v$ is a $\Delta$-vertex and $v$ is adjacent to some $3$-vertices of $G$, then its final charge is at least $\Delta - 6 - 1/2 - \Delta \times 2/5 > 0$.

\begin{claim}
Every vertex with maximum degree has positive final charge.
\end{claim}
\begin{proof}
  By \autoref{DEDGE}, there exists at least one vertex having degree at least $7$. Let $w$ be an arbitrary vertex of $G^{*}$ with maximum degree. If $\deg_{G^{*}}(w) \in \{7, \dots, \Delta -3\}$, then the final charge of $w$ is $\deg_{G^{*}}(w) - 6 > 0$. If $\deg_{G^{*}}(w) = \Delta$, then the final charge of $w$ is positive.

  If $\deg_{G^{*}}(w)= \Delta - 1$, then it cannot send charge to the $(4, 5)$-vertex as illustrated in Fig~\subref{fig:subfig:w} or ~\subref{fig:subfig:x} by \autoref{SumDelta3}. Therefore, the average charge sent out by $w$ is less than $2/5$ and the final charge of $w$ is positive.

  If $\deg_{G^{*}}(w) = \Delta - 2$, then it cannot send charge to the $(5, 5)$-vertex by \autoref{SumDelta3}. Therefore, the vertex $w$ can only send charge to crossing vertices, and the average charge sent out by the center $w$ is less than $1/3$, thus the final charge of $w$ is positive.

  Hence, the final charge of vertices with maximum degree is positive.
\end{proof}

Therefore, the sum of the final charge of each element is positive, which derives a contradiction. This complete the proof of theorem.
\begin{remark}%
  Zhang \etal \cite{MR3352881} proved that TCC holds for $1$-planar graphs with maximum degree at least~$13$. We can also  extend this result to $1$-toroidal graphs with maximum degree at least~$13$ by using similar techniques in this paper.
\end{remark}

\begin{problem}%
Does this method can be used to prove similar result for the diamond-free $1$-toroidal graphs?
\end{problem}

\vskip 3mm \vspace{0.3cm} \noindent{\bf Acknowledgments.} This project was supported by the National Natural Science Foundation of China (11101125) and partially supported by the Fundamental Research Funds for Universities in Henan. The author would like to thank the anonymous reviewers for their valuable comments and assistance on earlier drafts.


\begin{thebibliography}{10}

\bibitem{Behzad1965}
M.~Behzad, Graphs and their chromatic numbers, Ph.D. thesis, Michigan State
  University (1965).

\bibitem{MR832128}
O.~V. Borodin, Solution of the {R}ingel problem on vertex-face coloring of
  planar graphs and coloring of {$1$}-planar graphs, Metody Diskret. Analiz.
  (1984)~(41) 12--26, 108.

\bibitem{MR977440}
O.~V. Borodin, On the total coloring of planar graphs, J. Reine Angew. Math.
  394 (1989) 180--185.

\bibitem{MR1333779}
O.~V. Borodin, A new proof of the {$6$} color theorem, J. Graph Theory 19
  (1995)~(4) 507--521.

\bibitem{MR1865580}
O.~V. Borodin, A.~V. Kostochka, A.~Raspaud and E.~Sopena, Acyclic colouring of
  1-planar graphs, Discrete Appl. Math. 114 (2001)~(1-3) 29--41.

\bibitem{MR1304254}
T.~R. Jensen and B.~Toft, Graph coloring problems, Wiley-Interscience Series in
  Discrete Mathematics and Optimization, John Wiley \& Sons, Inc., New York,
  1995.

\bibitem{MR0453576}
A.~V. Kostochka, The total coloring of a multigraph with maximal degree {$4$},
  Discrete Math. 17 (1977)~(2) 161--163.

\bibitem{MR1425788}
A.~V. Kostochka, The total chromatic number of any multigraph with maximum
  degree five is at most seven, Discrete Math. 162 (1996)~(1-3) 199--214.

\bibitem{MR0187232}
G.~Ringel, {Ein Sechsfarbenproblem auf der Kugel}, Abh. Math. Sem. Univ.
  Hamburg 29 (1965)~(1) 107--117.

\bibitem{MR0278995}
M.~Rosenfeld, On the total coloring of certain graphs, Israel J. Math. 9
  (1971)~(3) 396--402.

\bibitem{MR1684286}
D.~P. Sanders and Y.~Zhao, On total 9-coloring planar graphs of maximum degree
  seven, J. Graph Theory 31 (1999)~(1) 67--73.

\bibitem{MR0285447}
N.~Vijayaditya, On total chromatic number of a graph, J. London Math. Soc. (2)
  3 (1971)~(3) 405--408.

\bibitem{Vizing1965}
V.~G. Vizing, Critical graphs with given chromatic class, Metody Diskret.
  Analiz. 5 (1965) 9--17.

\bibitem{MR0976059}
H.~P. Yap, Total colourings of graphs, Bull. London Math. Soc. 21 (1989)~(2)
  159--163.

\bibitem{MR3352881}
X.~Zhang, J.~Hou and G.~Liu, On total colorings of 1-planar graphs, J. Comb.
  Optim. 30 (2015)~(1) 160--173.

\bibitem{MR2876230}
X.~Zhang and G.~Liu, On edge colorings of 1-planar graphs without adjacent
  triangles, Inform. Process. Lett. 112 (2012)~(4) 138--142.

\bibitem{MR2965951}
X.~Zhang, J.~Wu and G.~Liu, List edge and list total coloring of 1-planar
  graphs, Front. Math. China 7 (2012)~(5) 1005--1018.

\bibitem{MR2779909}
X.~Zhang and J.-L. Wu, On edge colorings of 1-planar graphs, Inform. Process.
  Lett. 111 (2011)~(3) 124--128.

\end{thebibliography}
\end{document}